\theoremstyle{plain}
\newtheorem{theo}{Theorem}
\newtheorem{prop}{Proposition}
\newtheorem{coro}{Corollary}
\theoremstyle{definition}
\newtheorem{defi}{Definition}
\theoremstyle{definition}
\newtheorem{remark}{Remark}
\begin{document}

\begin{frontmatter}

\title{Reduction and classification of higher-order Markov chains}

\author[A]{\fnms{C.}~\snm{Gallesco}\thanks{email: \texttt{gallesco@unicamp.br}}}
\author[A]{\fnms{C. T.}~\snm{Genovese Huss Oliveira} \thanks{email: \texttt{c247005@dac.unicamp.br}}}
\author[B]{\fnms{D. Y.}~\snm{Takahashi} \thanks{email: \texttt{takahashiyd@gmail.com}}}

\address[A]{Universidade Estadual de Campinas}
\address[B]{Universidade Federal do Rio Grande do Norte}

\begin{abstract}
We study the class structure of finite-alphabet Markov chains with arbitrary memory length. To capture the structural constraints induced by prohibited transitions, we introduce the skeleton of a higher-order transition kernel, defined as a reduced set of contexts encoding all essential zero-probability patterns. To each skeleton we associate a binary transition matrix. We show that the communicating class structure of this matrix completely determines the recurrent classes of the original higher-order Markov chain, along with their periods. As a consequence, simple criteria for essential irreducibility and periodicity follow directly from the skeleton, without constructing the full first-order representation on the enlarged state space. From a practical perspective, this approach can yield significant computational gains. An example illustrates how the skeleton may have substantially smaller order than the original chain.
\end{abstract}

\begin{keyword}
\kwd{Higher-order Markov chains}
\kwd{Irreducibility}
\kwd{Recurrent classes}
\kwd{Variable-length Markov chains}
\kwd{MSC 60J10}
\end{keyword}

\end{frontmatter}

\section{Introduction}

Markov chains provide a fundamental framework for modeling dependence in categorical time series and play a central role in probability theory and its applications \cite{chung1967markov, norris1998markov}. While the classical theory of Markov chains is largely developed for first-order chains, $m$-th order Markov chains naturally arise in contexts where longer memory effects cannot be neglected, including symbolic dynamics, information theory, and time-series models \cite{fokianos2003regression, ost2023sparse, gallesco2022mixing}. Several modern machine learning algorithms are instances of higher-order Markov chains \cite{zekri2024large}.

A standard approach to studying an $m$-th order Markov chain on a finite alphabet $A$ consists of rewriting it as a first-order Markov chain on the product space $A^m$, see for example \cite{norris1998markov}. Although this reduction is often adequate, it does not explore the sparseness and the special structure of the reduced chain, which contains a large number of zero-probability transitions that do not correspond to genuine prohibitions in the original dynamics, but rather to artifacts of the embedding into $A^m$. Failing to explore the structural properties of the reduced chain can make the analysis of irreducibility, recurrence, and periodicity more obscure and computationally intensive, especially for high-order Markov chains, which are standard in modern applications \cite{ost2023sparse}. 

These difficulties motivate the development of tools that allow one to analyze higher-order Markov chains more intrinsically. In this work, we introduce a reduction framework based on a structural object associated with the transition kernel, which we call the \emph{skeleton}. The skeleton captures precisely the minimal contexts responsible for prohibiting transitions and encodes the essential connectivity structure of the chain.

Our first main result shows that the classification of an $m$-th order Markov chain, \emph{i.e.}, its decomposition into recurrent classes and their periods, is entirely determined by the class structure of a binary matrix naturally associated with the skeleton. In particular, closed classes and their periods for the skeleton matrix correspond exactly to recurrent classes and their periods in the original chain. This establishes the skeleton as a complete invariant for classification purposes.

Beyond its theoretical significance, the skeleton can lead to substantial computational gains. We introduce an explicit and efficient algorithm for extracting the skeleton from a given transition kernel, based on a pruning procedure on a tree representing the transition probabilities of the Markov chain. In several situations, this algorithm enables the effective analysis of chains with large memory requirements. We further derive simple criteria for the irreducibility and essential irreducibility of a Markov chain directly from its skeleton, which, in some instances, allow one to bypass the computation of powers of the transition matrix.

We illustrate the proposed framework on a 10-th order Markov chain example, where the skeleton has significantly smaller order. This example highlights both the conceptual clarity and the computational advantages of the skeleton-based approach.

Finally, in the same vein as this article, we recently introduced a criterion for the uniqueness of chains of infinite order when prohibited transitions exist, extending some of the concepts discussed here to non-Markovian chains \cite{gallesco2025uniqueness}. However, the notion of the skeleton and the complete classification of recurrent classes were not addressed in \cite{gallesco2025uniqueness} and do not appear to generalize easily to the non-Markovian setting.

\section{Theoretical results}
Let $A$ be a finite set, called the {\it alphabet}, and $|A|$ the cardinal of $A$. For ${\bf x}\in A^k$, $k\geq 1$, we denote by $x_i$ the $i$-th coordinate of ${\bf x}$ and for $ 1\leq i \leq j\leq k$ we write ${\bf x}^{j}_{i}:=(x_{i},\ldots, x_{j})$. For ${\bf x}\in A^k$ and ${\bf y} \in A^l$, the \emph{concatenation} ${\bf xy}$ is the new vector ${\bf z}\in A^{k+l}$ such that ${\bf z}_1^k={\bf x}_1^k$ and ${\bf z}_{k+1}^{k+l} = {\bf y}_1^l$. Let ${\bf e}$ be the neutral element of the concatenation operation, that is, ${\bf e}{\bf x}={\bf x}{\bf e}={\bf x}$ for all ${\bf x}$. Throughout the paper we will use the following conventions: if $i>j$, ${\bf x}^{j}_{i}={\bf e}$ and $A^0=\{{\bf e}\}$. Finally, for ${\bf x}\in A^k$, we denote by $|{\bf x}|$ the length of ${\bf x}$, that is, $k$.

We now consider a time-homogeneous Markov chain of order $m\geq 1$, $X=(X_n)_{n\geq 0}$, with state space $A$. We denote by $p$ its transition kernel, that is,
$p:A^m\times A\to [0,1]$ is defined by
\[
p({\bf x},a)=P(X_{n+1}=a\mid X_n=x_m,X_{n-1}=x_{m-1}, \dots,X_{n-m+1}=x_1)
\]
for all ${\bf x}=(x_1,\dots,x_m)$ and $n\geq m-1$.
Equivalently, $(X_n)_{n\geq 0}$ can be seen as a Markov chain of order 1 on the state space $A^m$. In this case, we will denote by $\mathbb{P}$ its transition matrix.

\medskip

A central object in our analysis is the \emph{skeleton}, which encodes the minimal contextual information needed to determine whether a given transition is prohibited or not. To define it precisely, we first introduce, for each pair $({\bf x},a)\in A^m\times A$, the quantity $\tau({\bf x},a)$, which identifies the length of the shortest suffix of ${\bf x}$ that already determines the transition behavior toward $a$.

\begin{defi}
For all $a\in A$ and ${\bf x}\in A^m$, we define 
\[\tau({\bf x},a)=\min\Big\{i\geq 0: p({\bf y}{\bf x}_{m-i}^m,a)=0, \forall {\bf y}\in A^{m-i-1} \; \text{or}\;\; p({\bf y}{\bf x}_{m-i}^m,a)>0,  \forall {\bf y}\in A^{m-i-1}\Big\}
\]
and 
\[
\tau_{{\bf x}}=\sup_{a\in A} \tau({\bf x},a).
\]
The set $\mathcal{S}:=\bigcup_{{\bf x}\in  A^m} \{{\bf x}^m_{m-\tau_{{\bf x}}}\}$ is called the {\it skeleton} of the kernel $p$. Let $K:=1+\sup_{{\bf x}}\tau_{{\bf x}}\in \{1,\dots,m\}$. $K$ is called the {\it order} of the skeleton.
\end{defi}

Intuitively, $\tau({\bf x},a)$ measures how far back in the past one needs to look in order to determine whether the transition to $a$ is forbidden or allowed from the context ${\bf x}$. More precisely, $\tau({\bf x},a)=i$ means that the suffix ${\bf x}_{m-i}^m$ of length $i+1$ is already sufficient to resolve this question: either the transition to $a$ is forbidden regardless of any preceding context ${\bf y}\in A^{m-i-1}$ (i.e., $p({\bf y}{\bf x}_{m-i}^m,a)=0$ for all ${\bf y}$), or it is allowed regardless of any preceding context (i.e., $p({\bf y}{\bf x}_{m-i}^m,a)>0$ for all ${\bf y}$). In other words, no additional past information beyond the last $i+1$ symbols is needed to determine the transition behavior toward $a$.
The quantity $\tau_{{\bf x}} = \sup_{a \in A} \tau({\bf x},a)$ then represents the worst case over all possible target symbols $a\in A$: it is the length of the shortest suffix of ${\bf x}$ that simultaneously resolves the transition behavior toward \emph{every} symbol in the alphabet. Intuitively, $\tau_{{\bf x}}$ is the minimal memory depth required at state ${\bf x}$ to fully characterize all its outgoing transitions. The skeleton $\mathcal{S}$ collects these minimal sufficient suffixes across all states ${\bf x}\in A^m$, and thereby encodes the complete transition structure of the chain in the most compressed form.
\medskip

In the following, we will consider binary square matrices with entries in a finite set $E$, that is, matrices whose elements belongs to $\{0,1\}$. In this case, the addition and product will always be the boolean ones. For example, $1+1=1$, $0\times 1=0$, etc. With these operations, the set of binary square matrices of a given dimension is a boolean algebra. Given a binary square matrix $B$ with entries in $E$, we denote by $B^n(k,l)$ the $(k,l)$-entry of its $n$-th boolean power $B^n$.
We can naturally transpose the notions of communicating class, closed (communicating) class and period of a closed (communicating) class from stochastic matrices to binary square matrices. For example, considering a binary square matrix $B$, a communicating class $\mathcal C\subset E$ is closed if, for all $k\in \mathcal C$, $B^n(k,l)=0$ for all $n\geq 1$ and $l\notin \mathcal C$. For a stochastic matrix, a closed class is also called {\it recurrent}. As for stochastic matrices, binary square matrices induce the following kind of partition of $E$: for each square binary matrix $B$,
there exist $N\geq 1$, $\mathcal{C}_1$, $\dots$, $\mathcal{C}_N$ closed classes and $\mathcal T$ which does not contain any closed class and is disjoint from $\mathcal{C}_1\cup\dots\cup \mathcal{C}_N$ such that 
$E=\mathcal{C}_1\cup \dots\cup \mathcal{C}_N\cup \mathcal{T}$ ($\mathcal{T}$ may be empty).
\begin{defi}
Consider a kernel $p$ with skeleton of order $K$. The {\it skeleton matrix} $\mathbb{M}$ is  the binary matrix of dimension $A^K\times A^K$  defined, for $a\in A$ and ${\bf x}\in A^{K}$, by 
$$\mathbb{M}({\bf u},{\bf v})
=\left\{
\begin{array}{ll}
1, &\mbox{if}\phantom{*}({\bf u},{\bf v})=({\bf x}_{1}^{K},{\bf x}_{2}^{K}a)\;\;\text{and}\;\; p({\bf y}{\bf x}_{1}^{K},a)>0,\; \text{for all}\;\; {\bf y}\in A^{m-K};\\
0, &\mbox{ otherwise}.
\end{array}
\right.
$$
\end{defi}
If $K<m$ we also define
\begin{equation*}
\mathcal{A}=\Big\{{\bf x}\in A^m \; \text{such that}\;\; \mathbb{M}({\bf x}_1^K,{\bf x}_2^{K+1})\mathbb{M}({\bf x}_2^{K+1},{\bf x}_3^{K+2})\dots \mathbb{M}({\bf x}_{m-K}^{m-1},{\bf x}_{m-K+1}^m)=1\Big\}
\end{equation*}
and let $\mathcal{A}=A^m$ if $K=m$.
We now state our main result.
\begin{theo}
\label{theo1}
Consider a Markov chain of order $m$, $X$, with skeleton of order $K$. If $\mathbb{M}$ has $N \geq 1$ closed classes $\mathcal{C}_1,\dots,\;\mathcal{C}_N$, then $\mathbb{P}$ has $N$ recurrent classes $\mathcal{R}_1,\dots,\;\mathcal{R}_N$. The classes $\mathcal{R}_i$ satisfy
\begin{equation*}
\mathcal{R}_i=\{{\bf y}{\bf z}\in A^m: {\bf y}\in \mathcal{C}_i \;\text{and}\; {\bf y}{\bf z}\in \mathcal{A}\}
\end{equation*} 
for $i\in\{1,\dots,N\}$.
Moreover, the period  of $\mathcal{C}_i$ is equal to the period of $\mathcal{R}_i$ for $i\in\{1,\dots,N\}$.
\end{theo}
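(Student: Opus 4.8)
The plan is to establish a tight correspondence between the dynamics of $\mathbb{P}$ on $A^m$ and the dynamics of $\mathbb{M}$ on $A^K$, mediated by the set $\mathcal{A}$. The first observation I would make precise is that $\mathcal{A}$ is exactly the set of states in $A^m$ that are reachable in the original chain: a word $a_1^m$ can occur as a length-$m$ window of a trajectory with positive probability if and only if each of its length-$(K+1)$ sub-windows is an allowed transition of the skeleton, which is precisely the defining product condition for membership in $\mathcal{A}$. The nontrivial direction here uses the definition of $\tau_x$ and the skeleton: the skeleton order $K$ is chosen so that whether $p(yx_{m-K+1}^m, a) > 0$ does not depend on $y$, so positivity of a transition in $\mathbb{P}$ is governed entirely by the last $K$ symbols, i.e., by $\mathbb{M}$. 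Consequently $A^m \setminus \mathcal{A}$ is a transient set for $\mathbb{P}$ (states outside $\mathcal{A}$ lead into $\mathcal{A}$ and are never revisited), so all recurrent classes of $\mathbb{P}$ lie inside $\mathcal{A}$, and it suffices to analyze $\mathbb{P}$ restricted to $\mathcal{A}$.

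Next I would set up the projection map $\pi: \mathcal{A} \to A^K$ sending $a_1^m$ to its suffix $a_{m-K+1}^m$, and verify two lattice-type statements: (i) $\pi$ maps onto the support of $\mathbb{M}$ (every state of $A^K$ that appears in a closed class of $\mathbb{M}$ has a preimage in $\mathcal{A}$, since closed classes are in particular recurrent for $\mathbb{M}$ hence their elements extend to bi-infinite skeleton-admissible sequences); and (ii) for $ab, a'b' \in \mathcal{A}$ there is a positive-probability path from $ab$ to $a'b'$ in $\mathbb{P}$ if and only if there is a path from $\pi(ab)$ to $\pi(a'b')$ in $\mathbb{M}$ — the forward implication is immediate by projecting the path, and the backward implication is obtained by lifting a skeleton path to an admissible word path, using again that admissibility of length-$m$ windows is a local ($K+1$-local) condition. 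Statement (ii) immediately gives a bijection between communicating classes of $\mathbb{P}$ inside $\mathcal{A}$ and communicating classes of $\mathbb{M}$, and it preserves the "closed" property in both directions; combined with transience of $A^m\setminus\mathcal{A}$ this yields $N$ recurrent classes $\mathcal{R}_1, \dots, \mathcal{R}_N$ with $\mathcal{R}_i = \pi^{-1}(\mathcal{C}_i) \cap \mathcal{A} = \{ab : a \in \mathcal{C}_i,\ ab \in \mathcal{A}\}$, which is the claimed formula.

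For the period statement, I would use the standard characterization of the period of a closed class as the gcd of the lengths of all closed paths (cycles) through a fixed reference state. Fix $a \in \mathcal{C}_i$ and a lift $ab \in \mathcal{R}_i$. Projecting a cycle at $ab$ in $\mathbb{P}$ gives a cycle of the same length at $a$ in $\mathbb{M}$, so $\mathrm{per}(\mathcal{C}_i) \mid \mathrm{per}(\mathcal{R}_i)$; conversely, lifting a cycle at $a$ in $\mathbb{M}$ via the admissible-word construction gives a path in $\mathbb{P}$ from $ab$ to $\pi^{-1}$-preimages of $a$, which may not return to $ab$ itself, but since $\mathcal{R}_i$ is a single communicating class one can close it up by appending a fixed returning path, and an averaging/gcd argument over sufficiently long iterates shows the two gcd's coincide. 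The cleanest route is probably to show directly that $\gcd$ of cycle lengths through $ab$ in $\mathbb{P}$ equals $\gcd$ of cycle lengths through $a$ in $\mathbb{M}$ by exhibiting, for each skeleton cycle length $\ell$, a $\mathbb{P}$-cycle of length $\ell$ through some state of $\mathcal{R}_i$ and invoking aperiodicity-type uniformity within a communicating class.

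The main obstacle I anticipate is the lifting step — making rigorous that any skeleton-admissible path of symbols extends to a genuine positive-probability trajectory of $\mathbb{P}$, i.e., that local admissibility at the level of $\mathbb{M}$ (length-$(K+1)$ windows) really does certify global admissibility at the level of $\mathbb{P}$ (length-$m$ windows and genuine transition positivity). This is exactly where the definition of $\tau(x,a)$, the "for all $y$" quantifiers, and the choice $K = \sup_x \tau_x$ must be used carefully, and it is the crux that makes $\mathbb{M}$ a faithful invariant rather than a mere coarsening.
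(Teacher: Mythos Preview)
Your overall architecture---identify $\mathcal{A}$ as the reachable states, set up a projection to $A^K$, show a bijection of communicating classes that preserves closedness, then match periods---is essentially the paper's strategy as well, just spelled out more explicitly. The paper proceeds in the same order: it checks directly that each $\mathcal{R}_i$ is a communicating closed class, then rules out any further recurrent classes by observing that states outside $\bigcup_i \mathcal{R}_i$ either fail to lie in $\mathcal{A}$ or have a transient $K$-prefix, hence eventually fall into some $\mathcal{R}_i$.

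There is, however, a genuine error in your projection. You take $\pi(a_1^m)=a_{m-K+1}^m$, the \emph{suffix}, and then assert both statement (ii) and the identity $\pi^{-1}(\mathcal{C}_i)\cap\mathcal{A}=\{ab:a\in\mathcal{C}_i,\ ab\in\mathcal{A}\}$. Neither holds with the suffix map. The right-hand side of that identity picks out $c\in\mathcal{A}$ whose \emph{prefix} $c_1^K$ lies in $\mathcal{C}_i$, which is strictly smaller than $\pi^{-1}(\mathcal{C}_i)\cap\mathcal{A}$ whenever $\mathbb{M}$ has transient states feeding into $\mathcal{C}_i$: a word $c\in\mathcal{A}$ with transient prefix and suffix in $\mathcal{C}_i$ lies in your $\pi^{-1}(\mathcal{C}_i)$ but is transient for $\mathbb{P}$, since to return to $c$ the chain would have to revisit the transient $K$-window $c_1^K$. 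Correspondingly, the backward direction of (ii) fails: if $\mathrm{suffix}(c)\to\mathrm{suffix}(c')$ in $\mathbb{M}$ but $\mathrm{prefix}(c')$ is transient and not reachable from $\mathrm{suffix}(c)$, there is no $\mathbb{P}$-path $c\to c'$. The correct reachability equivalence is $c\to c'$ in $\mathbb{P}$ iff $\mathrm{suffix}(c)\to\mathrm{prefix}(c')$ in $\mathbb{M}$; once you use this (or equivalently project to the prefix), your argument goes through and lands exactly on the theorem's formula.

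For the period, your projection/lifting of cycles is the right idea but the ``averaging/gcd'' closure step is vague. The paper avoids this by using the cyclic decomposition $D_0,\dots,D_{d-1}$ of $\mathcal{C}_i$ directly: for $c\in\mathcal{R}_i$ one locates $c_1^K\in D_j$ and $c_{m-K+1}^m\in D_k$ with $k-j\equiv m-K\pmod d$, and then reads off that $\mathbb{P}^{nd}(c,c)>0$ for all large $n$, forcing $d(\mathcal{R}_i)\mid d$.
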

Let us note that Theorem \ref{theo1} does not say anything about the transient states of $X$. In fact, it may happen that the set $\mathcal{T}=\emptyset$ and $X$ has some transient states. Also, $\mathcal{T}$ may be a communicating class and even so $X$ has several transient classes.
\medskip

\noindent
{\it Proof.}
 We first observe that for $i\in \{1,\dots,N\}$, $\mathcal{R}_i$ is a communicating class. Indeed, consider ${\bf c}, {\bf \hat{c}}\in \mathcal{R}_i$. By definition, there exist ${\bf a}, {\bf \hat{a}}\in \mathcal{C}_i$ and ${\bf b}, {\bf \hat{b}}\in A^{m-K}$ such that ${\bf c}={\bf a}{\bf b}$ and ${\bf \hat{c}}={\bf \hat{a}}{\bf \hat{b}}$. Since $\mathcal{C}_i$ is closed and ${\bf \hat{a}}{\bf \hat{b}}\in \mathcal{A}$, we deduce that $\mathbb{P}^n({\bf a}{\bf b}, {\bf \hat{a}}{\bf \hat{b}})>0$ for some $n\geq 1$. Also observe that, by definition, the classes $\mathcal{R}_i$, $i\in \{1,\dots,N\}$ are  recurrent and disjoint. 
 
 To prove the first part of the theorem, it remains to show that $\mathbb{P}$ cannot have more than $N$ recurrent classes. If $(\mathcal{C}_1\cup\dots\cup\mathcal{C}_N)^c=\emptyset$, then either $(\mathcal{R}_1\cup\dots\cup\mathcal{R}_N)^c=\emptyset$ (and the first part of the theorem is proved) or $(\mathcal{R}_1\cup\dots\cup\mathcal{R}_N)^c\neq \emptyset$. In this second case, $(\mathcal{R}_1\cup\dots\cup\mathcal{R}_N)^c$ contains elements which are not in $\mathcal{A}$ and therefore cannot be recurrent. If $(\mathcal{C}_1\cup\dots\cup\mathcal{C}_N)^c \neq \emptyset$, then either an element of $(\mathcal{R}_1\cup\dots\cup\mathcal{R}_N)^c$ does not belong to $\mathcal{A}$ (and thus cannot be recurrent), or is of the form ${\bf a}{\bf b}$ where ${\bf a}\in (\mathcal{C}_1\cup\dots\cup\mathcal{C}_N)^c$ and ${\bf a}{\bf b}\in \mathcal{A}$. In this second case, there exists $n\geq 1$, $i\in \{1,\dots, N\}$ and ${\bf w}\in \mathcal{C}_i$ such that $\mathbb{P}^n({\bf a}{\bf b}, {\bf w})>0$ and therefore ${\bf a}{\bf b}$ is not recurrent.

Let us denote by $d(\mathcal{C}_i)$ and $ d(\mathcal{R}_i)$ the periods of $\mathcal{C}_i$ and $\mathcal{R}_i$ respectively. To prove the second part of the theorem, first observe that $d(\mathcal{C}_i)\leq d(\mathcal{R}_i)$ by definition of $\mathcal{R}_i$. We will show by contradiction that we cannot have  $d(\mathcal{C}_i)< d(\mathcal{R}_i)$. For the rest of the proof, let us denote $d=d(\mathcal{C}_i)$ and let $\mathcal{D}_0,\dots,\;\mathcal{D}_{d-1}$ be the corresponding cycling classes. Consider ${\bf c}\in \mathcal{R}_i$. We have that ${\bf c}_1^K\in \mathcal{D}_j$ for some $j$ and ${\bf c}_{m-K+1}^m\in \mathcal{D}_k$ for $k-j=m-K+1$ $\text{mod}(d)$. Then, there exists $n_0\geq 0$ such that, for all $n\geq n_0$, 
$\mathbb{M}^{nd+d-(k-j)}({\bf c}_{m-K+1}^m, {\bf c}_1^K)=1$.
We deduce that $\mathbb{P}^{(n+1)d}({\bf c},{\bf c})>0$ for all large enough $n$. Suppose that $d(\mathcal{R}_i)>d$.
By the former affirmation $d(\mathcal{R}_i)$
divides $(n+1)d$ and $(n+2)d$ for large enough $n$ and thus divides $(n+2)d-(n+1)d=d$. But since $d(\mathcal{R}_i)>d$, we obtain a contradiction. $\;\;\square$
\medskip

In our context, the following definition will be useful.
\begin{defi}
A Markov chain $X$ with finite state space is {\it essentially irreducible} if $\mathbb{P}$ has a unique recurrent class. 
\end{defi}
Observe that in the above definition we do not exclude the presence of transient states. Therefore, the notion of essential irreducibility is different from the notion of irreducibility. It is well known that an essentially irreducible chain has a unique invariant probability measure.
As a direct consequence of Theorem \ref{theo1} we have the following 
\begin{coro}
If $\mathbb M$ has a unique closed class then $X$ is essentially irreducible.
\end{coro}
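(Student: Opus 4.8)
The plan is to derive the corollary directly from Theorem \ref{theo1}, so the argument is essentially a matter of checking that the hypotheses of that theorem are met. First I would observe that, since $A^K$ is a finite set, the skeleton matrix $\mathbb{M}$ — regarded as a binary square matrix on $E = A^K$ — necessarily admits at least one closed communicating class. This is the boolean-matrix analogue of the classical fact that a finite stochastic matrix always possesses a recurrent class, and it is exactly the statement recorded in the discussion preceding the definition of $\mathbb{M}$, namely that the induced partition $E = \mathcal{C}_1 \cup \dots \cup \mathcal{C}_N \cup \mathcal{T}$ has $N \geq 1$. Consequently, the assumption ``$\mathbb{M}$ has a unique closed class'' is equivalent to $N = 1$ in the notation of Theorem \ref{theo1}.

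Next I would simply invoke Theorem \ref{theo1} with $N = 1$: it asserts that $\mathbb{P}$ then has exactly one recurrent class $\mathcal{R}_1$ (given explicitly by $\mathcal{R}_1 = \{ab : a \in \mathcal{C}_1,\ ab \in \mathcal{A}\}$, though this description is not needed here). By the definition of essential irreducibility, having a unique recurrent class is precisely what it means for $X$ to be essentially irreducible, which concludes the proof.

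I do not expect any genuine obstacle in this argument; the proof is a one-line consequence of the main theorem. The only point deserving a word of care is the verification that the standing hypothesis $N \geq 1$ of Theorem \ref{theo1} is automatically satisfied, so that the word ``unique'' really does force $N = 1$ and does not leave open a degenerate case of zero closed classes. Since $E = A^K$ is finite, this is immediate, and no further work is required.
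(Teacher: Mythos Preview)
Your argument is correct and matches the paper's approach exactly: the paper states the corollary \emph{as a direct consequence of} Theorem~\ref{theo1} and gives no further proof, which is precisely the one-line deduction you spell out. Your extra remark that $N\geq 1$ is automatic on the finite set $A^K$ is a harmless clarification the paper leaves implicit.
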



To check that $\mathbb{M}$ has a unique closed class we can use the following simple result.
\begin{prop}
 \label{prop1}
$\mathbb{M}$ has a unique closed class if and only if the matrix $\sum_{n=1}^{|A|^K} \mathbb{M}^n$ has a column with only $1$'s.
 \end{prop}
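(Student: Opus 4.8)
The plan is to translate the statement into a reachability property of the directed graph whose vertices are the elements of $A^K$ and whose edges are the pairs $(u,v)$ with $\mathbb{M}(u,v)=1$. Concretely, I would prove that $\mathbb{M}$ has a unique closed class if and only if there is a vertex $j$ reachable in at least one step from every vertex. The proposition is then immediate from a walk-shortening remark: since $A^K$ has $|A|^K$ elements, any walk of length $\geq 1$ from $i$ to $j$ can be replaced by one of length at most $|A|^K$ --- a simple path when $i\neq j$, a simple cycle when $i=j$ --- so the $(i,j)$-entry of $\sum_{n=1}^{|A|^K}\mathbb{M}^n$ equals $1$ exactly when $i$ reaches $j$ in at least one step, and a column of $1$'s in this sum is precisely a vertex reachable from everywhere.

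Before the two implications I would record that $\mathbb{M}$ has no zero row. A state $u$ with $\mathbb{M}(u,\cdot)=0$ would make $\{u\}$ a closed class of $\mathbb{M}$; but $K\leq m-1$ (taking $i=m-1$ in the definition of $\tau(x,a)$, the relevant string is $x$ itself and one of $p(x,a)=0$ or $p(x,a)>0$ always holds), so $m>K$, and Theorem \ref{theo1} would then attach to $\{u\}$ an empty recurrent class of $\mathbb{P}$, which is absurd. Consequently every state belonging to a closed communicating class of $\mathbb{M}$ lies on a cycle.

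For the ``if'' direction, suppose column $j$ of $\sum_{n=1}^{|A|^K}\mathbb{M}^n$ consists only of $1$'s, so every state reaches $j$ in at least one step. Given any closed class $\mathcal{C}$ and any $i\in\mathcal{C}$, a walk from $i$ to $j$ cannot leave $\mathcal{C}$, hence $j\in\mathcal{C}$; so $j$ belongs to every closed class, and since $\mathcal{C}_1,\dots,\mathcal{C}_N$ are pairwise disjoint this forces $N=1$. For the ``only if'' direction, assume $\mathbb{M}$ has the single closed class $\mathcal{C}_1$ and fix $j\in\mathcal{C}_1$. The set of states reachable from a given $i$ is finite, nonempty and closed under the one-step relation, hence contains a sink strongly connected component; being a sink it is a closed communicating class, so it equals $\mathcal{C}_1$ by uniqueness, and therefore $i$ reaches some state of $\mathcal{C}_1$. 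Since $\mathcal{C}_1$ is strongly connected and each of its states lies on a cycle, that state reaches $j$ in at least one step, and composing walks shows $i$ reaches $j$ in at least one step; thus column $j$ of $\sum_{n=1}^{|A|^K}\mathbb{M}^n$ is all $1$'s.

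I expect the ``only if'' direction to be the main point: one must combine the funnelling-into-$\mathcal{C}_1$ argument with the uniform step bound $|A|^K$ and with the fact that $\mathcal{C}_1$ contains a cycle (which is where the no-zero-row remark, and hence Theorem \ref{theo1}, enters). The ``if'' direction and the walk-shortening bound are routine facts about finite binary matrices, so I would state them briefly and spend the bulk of the argument on the reduction to $\mathcal{C}_1$.
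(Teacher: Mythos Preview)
Your argument is correct and follows the same reachability-plus-pigeonhole strategy as the paper: both implications match, and your walk-shortening bound is exactly the paper's remark that one may take $n_i\le |A|^K$. The one methodological difference is that you explicitly justify, via Theorem~\ref{theo1} together with the bound $K\le m-1$, why $\mathbb{M}$ has no zero row (hence why any state in the unique closed class lies on a cycle and can reach itself in at least one step); the paper's proof simply asserts that every $i$ reaches $j_0$ in at least one step and leaves this point implicit. Your detour through Theorem~\ref{theo1} is legitimate, since that theorem is proved independently and appears before the proposition, though it makes Proposition~\ref{prop1} lean on the paper's main result rather than stand on its own as the paper's short proof does. (A small caveat: the paper elsewhere writes $K\in\{0,\dots,m\}$ and discusses the case $K=m$, in tension with the bound $K\le m-1$ you extract from the literal definition of $\tau$; this appears to be an off-by-one inconsistency in the paper's notation rather than an error in your reasoning.)
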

 \noindent
{\it Proof:} Suppose that $\mathbb{M}$ has at least two closed classes. Since $\sum_{n=1}^{|A|^K} \mathbb{M}^n$ has a column with only 1's, there exists a state ${\bf v}\in A^K$ such that for all ${\bf u}\in A^K$ there exists $n_{\bf u}\in \{ 1,\dots,|A|\}$ such that $\mathbb{M}^{n_{\bf u}}({\bf u}, {\bf v})=1$. Clearly, ${\bf v}$ must belong to a closed class. But we obtain a contradiction because if there are at least two closed classes there must exist some element from which ${\bf v}$ is not accessible.

On the other hand, suppose that $\mathbb{M}$ has a unique closed class and consider ${\bf v}$ in this closed class. For all ${\bf u}\in A^K$, there exists some integer $n_{\bf u}$ such that $\mathbb{M}^{n_{\bf u}}({\bf u}, {\bf v})=1$. Since $A$ is finite, we can always take $n_{\bf u}\leq |A|^K$. This implies that $\sum_{n=1}^{|A|^K} \mathbb{M}^n$ has a column with entries all equal to 1. $\;\;\square$
 \medskip

Regarding the irreducibility of a Markov chain, we have the following

 \begin{prop}
 \label{prop2}
 Suppose that $X$ is irreducible. Then, the only two possible cases are $K=m$ or $K=1$. In the case $K=1$, we have $p({\bf x},a)>0$ for all ${\bf x}\in A^m$ and $a\in A$.
 \end{prop}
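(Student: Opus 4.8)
The plan is to prove the following sharper statement, from which the proposition is immediate: \emph{if $X$ is irreducible and $K\le m-1$, then $p(x,a)>0$ for every $x\in A^m$ and every $a\in A$}. Granting this and recalling that $K\in\{0,\dots,m\}$, an irreducible chain satisfies either $K=m$ or $K\le m-1$; in the latter case the kernel is everywhere positive, which forces every $\tau(x,a)$ to equal $0$ (the empty context already ``globally allows'' each letter), so $K=0$. This yields the dichotomy $K\in\{0,m\}$ and identifies the case $K=0$ with strict positivity of $p$, the reverse implication being the trivial remark just made.

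To prove the sharper statement I would argue by contradiction, promoting a single forbidden transition to a prohibition that is already visible on a short suffix. Suppose $p(x_1,a_1)=0$ for some $x_1\in A^m$ and $a_1\in A$. Since $\tau(x_1,a_1)\le\tau_{x_1}\le K\le m-1$, the suffix $v$ of $x_1$ of length $\tau(x_1,a_1)$ already determines the transition into $a_1$: either $p(w,a_1)=0$ for every $w\in A^m$ ending in $v$, or $p(w,a_1)>0$ for every such $w$. Because $x_1$ itself ends in $v$ and $p(x_1,a_1)=0$, the first alternative must hold, and $|v|=\tau(x_1,a_1)\le m-1$.

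Next I would use $|v|+1\le m$ to produce an inaccessible state. Pick $z^*\in A^m$ whose last $|v|+1$ coordinates form the block $v\,a_1$ (for instance $z^*=\gamma\,v\,a_1$ with $\gamma\in A^{m-|v|-1}$, where $\gamma=e$ is allowed when $|v|=m-1$). Any $u\in A^m$ with $\mathbb{P}(u,z^*)>0$ must satisfy $(u_2,\dots,u_m)=(z^*_1,\dots,z^*_{m-1})$, hence is of the form $u=(c,z^*_1,\dots,z^*_{m-1})$ with $\mathbb{P}(u,z^*)=p(u,a_1)$, so that $p(u,a_1)>0$. But since $|v|\le m-1$, the last $|v|$ coordinates of this $u$ are exactly $v$, and the second paragraph then gives $p(u,a_1)=0$, a contradiction. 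Therefore $\mathbb{P}(\,\cdot\,,z^*)\equiv0$, whence $\mathbb{P}^n(\,\cdot\,,z^*)\equiv0$ for all $n\ge1$ and in particular $\mathbb{P}^n(z^*,z^*)=0$ for all $n\ge1$, contradicting the irreducibility of $X$. (The case $|A|=1$ is vacuous, as then $p\equiv1$ and $K=0$.) This contradiction proves that $p$ is everywhere positive, and the proposition follows.

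I expect the only delicate point to be the promotion step in the second paragraph --- passing from the single equality $p(x_1,a_1)=0$ to the assertion that $p(w,a_1)=0$ for \emph{all} length-$m$ words $w$ ending in $v$ --- since this is precisely where the definition of $\tau$ and the bound $\tau(x_1,a_1)\le K$ are used, and one must be careful that $|v|\le m-1$ \emph{strictly}, so that the block $v\,a_1$ has length at most $m$ and the shift-map bookkeeping of the third paragraph is valid. Everything else reduces to routine manipulation of the induced first-order chain on $A^m$.
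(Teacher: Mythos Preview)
Your proof is correct and follows essentially the same route as the paper's: assume $K<m$, use the definition of $\tau$ to promote a single zero $p(x_1,a_1)=0$ to a prohibition depending only on a suffix of length at most $m-1$, and deduce that any $z^*\in A^m$ ending in that suffix followed by $a_1$ has no $\mathbb{P}$-predecessor, contradicting irreducibility. The paper's version is terser and is phrased through the skeleton matrix~$\mathbb{M}$ (it asserts that irreducibility together with $K<m$ forces $\mathbb{M}$ to be the full one-step shift on $A^K$, hence $p>0$ everywhere), but the implicit justification of that assertion is exactly the inaccessible-state argument you have written out.
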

 \noindent
{\it Proof:}
 Consider $\mathbb{P}$ irreducible and suppose that $K<m$. Then, $\mathbb{M}$ has the following structure
$$\mathbb{M}({\bf u},{\bf v})
=\left\{
\begin{array}{ll}
	1, &\mbox{whenever}\phantom{*}({\bf u},{\bf v})=({\bf x}_{1}^{K},{\bf x}_{2}^{K}a),\; {\bf x}_{1}^{K}\in A^K \mbox{ and } a\in A,\\
	0, &\mbox{ otherwise}.
\end{array}
\right.
$$
But, if $\mathbb{M}$ has the above structure then $p({\bf x},a)>0$ for all ${\bf x}\in A^m$ and $a\in A$. Therefore we deduce that $K=1$. $\;\;\square$

The contraposition of Proposition \ref{prop2} gives us a simple criterion to check that a given Markov chain is not irreducible: if $m>K>1$ then the chain is not irreducible. When $K = 1$, the chain is clearly irreducible. The only remaining case is $ K=m$, for which we have the following simple criterion.
 
\begin{prop} \label{prop3}
    Let $\mathcal{S}$ be a skeleton of order $K=m$. If there exists ${\bf w}\in \mathcal{S}$ of length $\ell<m$, such that $p({\bf z}{\bf w},a)=0$, for some $a\in A$ and ${\bf z}\in A^{m-\ell}$, then $X$ is not irreducible.
\end{prop}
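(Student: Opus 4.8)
The plan is to first upgrade the single prohibition $p(zw,a)=0$ from the hypothesis to a \emph{prefix-independent} one, namely $p(z'w,a)=0$ for every $z'\in A^{m-\ell}$ with $\ell:=|w|$, and then to use this to produce a state of the first-order chain $\mathbb{P}$ on $A^m$ that is accessible from no state whatsoever, which contradicts irreducibility.

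For the first step I would unwind the membership $w\in\mathcal{S}$: there is $x\in A^m$ with $w=x^m_{m-\tau_x}$, so $\ell=\tau_x+1$. Since $\tau_x=\sup_{b\in A}\tau(x,b)$, every letter $b\in A$ satisfies $\tau(x,b)\leq\ell-1$, so the suffix $x^m_{m-\tau(x,b)}$ of $x$ has length at most $\ell$ and is a suffix of $w$. By minimality in the definition of $\tau(x,b)$, at level $i=\tau(x,b)$ one of the two listed alternatives holds, and each of them is a statement about only the last $i+1$ coordinates of the context; hence it is inherited by every context ending in $w$, since such a context also ends in $x^m_{m-\tau(x,b)}$. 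This shows that, for each fixed $b\in A$, the set $\{z'\in A^{m-\ell}:p(z'w,b)=0\}$ is either empty or all of $A^{m-\ell}$. Specializing to $b=a$ and invoking $p(zw,a)=0$ yields the desired prefix-independent prohibition.

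For the second step, since $\ell<m$ I would choose $v\in A^m$ with $v^{m-1}_{m-\ell}=w$ and $v_m=a$, the coordinates $v_1,\dots,v_{m-\ell-1}$ (if any) being arbitrary. By the shift structure of $\mathbb{P}$, any state $u$ with $\mathbb{P}(u,v)>0$ must have the form $(c,v_1,\dots,v_{m-1})$ for some $c\in A$, with $\mathbb{P}(u,v)=p\big((c,v_1,\dots,v_{m-1}),v_m\big)$; but $(c,v_1,\dots,v_{m-1})$ ends in the block $v^{m-1}_{m-\ell}=w$, so $\mathbb{P}(u,v)=p(z'w,a)$ for a suitable $z'\in A^{m-\ell}$, which vanishes by the first step. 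Hence $\mathbb{P}(u,v)=0$ for all $u\in A^m$; expanding $\mathbb{P}^n(u,v)$ as a sum over length-$n$ paths, every summand contains a factor $\mathbb{P}(\cdot,v)$, so $\mathbb{P}^n(u,v)=0$ for all $u\in A^m$ and all $n\geq 1$. Thus $v$ is accessible from no state, $\mathbb{P}$ is not irreducible, and therefore neither is $X$.

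The delicate point is the first step: one must read off carefully from the definitions of $\tau(x,b)$ and $\tau_x$ that $w\in\mathcal{S}$ forces, at suffix length $\ell$, the dichotomy ``all prefixes forbidden'' versus ``all prefixes allowed'', so that a prohibition observed for a single prefix propagates to all of them. The second step is then a routine path-counting argument; the hypothesis $K=m$ only serves to situate us in the case left open after Proposition~\ref{prop2} and plays no further role.
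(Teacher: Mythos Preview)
Your proposal is correct and follows essentially the same approach as the paper: first argue that $w\in\mathcal{S}$ forces $p(z'w,a)=0$ for \emph{all} prefixes $z'$, then exhibit a state $u\in A^m$ with suffix $wa$ that is reachable from nowhere. The paper asserts the first step in a single line (``Since $w\in\mathcal{S}$, we obtain that $p(yw,a)=0$ for all $y$''), whereas you spell out carefully from the definition of $\tau(x,b)$ why the dichotomy at the level $|w|$ holds; this extra care is warranted and does not change the structure of the argument.
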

\begin{proof}
     Since ${\bf w}\in \mathcal{S}$, we obtain that $p({\bf y}{\bf w},a)=0$ for all ${\bf y}\in A^{m-\ell}$. Also, as $|{\bf w}|<m$, we have that $|{\bf w}a|\leq m$, hence $\exists\; {\bf u}\in A^m$ such that ${\bf w}a$ is a suffix of ${\bf u}$. We deduce that for all ${\bf x}\in A^m$, $\mathbb{P}({\bf x},{\bf u})=0$ and therefore $X$ is not irreducible.
\end{proof}


\section{Skeleton determination algorithm}
In this section we present a simple algorithm to obtain the skeleton of a Markov chain. 
\medskip

We consider a Markov chain $X$ of order $m$ with state space $A=\{a_1,\cdots,a_{|A|}\}$ where $|A|$ is the cardinal of $A$. In the following algorithm we will contruct a decreasing sequence of subtrees of the following \emph{rooted tree}
\[
T:=\Big\{v_m\dots v_2v_1\varnothing,\; v_i\in A,\; i\in \{1,\dots,m\}\Big\}
\]
where $\varnothing$ is the root of the tree. Let us now introduce some basic notions for the tree $T$ that will also be used for its subtrees. For $1\leq k\leq m$, an element $v_k\dots v_1$ is a {\it node} of depth $k$. For $k \geq 2$ and a given node $v_k\dots v_1$, its {\it parent node} is the node $v_{k-1}\dots v_1$ (the node $v_{k-2}\dots v_1$ is not considered a parent). The node $v_k\dots v_1$ is called a {\it child} of $v_{k-1}\dots v_1$. Also, we call {\it leaf} a node without any child. For example for the tree $T$ the leaves are all the nodes of depth~$m$. 

We also consider the following probability kernels: for $1\leq k\leq m$, $n\geq k-1$, each node $v_k\dots v_1$ and $a\in A$, let
\[
q_k(v_k\dots v_1,a):=P(X_{n+1}=a\mid X_n=v_1,X_{n-1}=v_2, \dots,X_{n-k+1}=v_k).
\]
From now on, to simplify notation, we will just write $q$ instead of $q_k$.
\begin{remark} The above family of kernels extends the kernel $p$ from Section~2. Nevertheless, we warn the reader that in Section~2 the index $m$ was used for the most recent observation of the Markov chain. Here, we changed this convention to maintain the natural notion of depth in $T$. Still, observe that the node $v_k\dots v_1$ is written with time increasing from left to right: $v_1$ is the most recent observation and $v_k$ the oldest. This is the standard convention for suffix trees and is consistent with the notation used in Section~2, where $p({\bf x},a)$ conditions on ${\bf x}=(x_1,\dots,x_m)$ with $x_m$ being the most recent observation. 
\end{remark}
Consider the parent node $\chi$ of a leaf, the set of its children leaves is 
$$\mathcal{X}=\{\chi_j:\chi_j=a_j\chi,\;j=1,\cdots,|A|\}.$$
For each $\chi_j\in\mathcal{X}$ we define its {\it transition vector} $t(\chi_j)$ as follows: $t(\chi_j):=(t_1,\cdots,t_{|A|})$, where $t_i=\mathbf{1}\{q(\chi_j,a_i)>0\}$ for $i\in \{1,\dots,|A|\}$. 
\medskip

\noindent
The algorithm is as follows:
\medskip

\noindent
\begin{center}
\textbf{Skeleton determination algorithm}
\medskip

\medskip
\begin{minipage}{0.7\textwidth}
\begin{quote}
\textbf{Input:} Tree $T$ of depth $m$ with transition vectors $t(\cdot)$.  \\
\textbf{Output:} Reduced tree isomorphic to the skeleton $\mathcal{S}$.
\end{quote}
\begin{quote}
\begin{tabbing}
\hspace{1.2cm}\=\hspace{1.2cm}\=\kill
Set $\ell \leftarrow m$ and $T_\ell \leftarrow T$.\\
\textbf{while} $\ell \ge 1$ \textbf{do}\\
\> For each set $\mathcal{X} \subset T_\ell$ of children leaves at depth $\ell$:\\
\>\> \textbf{if} $t(\chi_j)=t(\chi_{j'})$ for all $\chi_j,\chi_{j'}\in \mathcal{X}$ \textbf{then}\\
\>\> Remove all nodes in $\mathcal{X}$ from $T_\ell$.\\
\>\> \textbf{end if}\\
\> Set $T_{\ell-1}$ equal to the resulting tree.\\
\> \textbf{if} $T_{\ell-1}=T_\ell$ \textbf{then stop}.\\
\> Set $\ell \leftarrow \ell-1$.\\
\textbf{end while}
\end{tabbing}
\end{quote}
\end{minipage}
\end{center}

\medskip

The tree obtained at the end of the algorithm is isomorphic to the skeleton $\mathcal{S}$ of the original Markov chain. Also, the transition vectors of its leaves provide the \textit{skeleton matrix} $\mathbb{M}$.
\begin{figure}[h!]
\footnotesize
\centering
    \begin{forest}
    for tree={grow=south},
    tikz = {\draw[->, thick, shorten >=70pt, shorten <=95pt] (l) to (r);}
    [,phantom, s sep=0.5cm
        [{\vdots}
            [{w\,[0.5,\,0.5]}, name=l
                [{0w\,[0.6,\,0.4]}
                    [{00w\,[0,\,1]}][{10w\,[0.3,\,0.7]}]]
                [{1w\,[0.25,\,0.75]}
                    [{01w\,[0.4,\,0.6]}][{11w\,[0.95,\,0.05]}]]]]
        [{\vdots}
            [{w\,[1,\,1]}, name = r
                [{0w\,[1,\,1]}
                    [{00w\,[0,\,1]},red][{10w\,[1,\,1]}]]
                [{1w\,[1,\,1]}
                    [{01w\,[1,\,1]}, blue][{11w\,[1,\,1]},blue]]]
    ]]
    \end{forest}
    \caption{Part of the tree $T$ corresponding to the descendants of node w. On the feft-hand figure, for each node we show the corresponding transition probabilities $[q(\cdot,0),q(\cdot,1)]$. On the right-hand figure, we show the transition vectors
    $[t_0,t_1]$ associated to each node. The node $00\text{w}$ (in red) will not be cut, while nodes $01\text{w}$ and $11\text{w}$ (in blue) will be cut off the tree.}
    \label{algo1}
\end{figure}
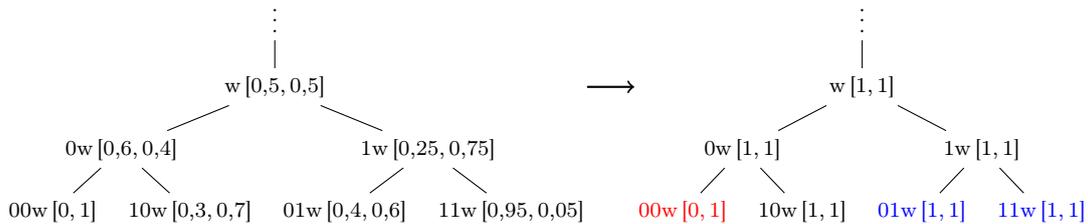
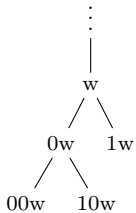
\begin{figure}[h!]
    \footnotesize
    \centering
        \begin{forest}
        for tree={grow=south},
        [{\vdots}
            [{w},
                [{0w}
                    [{00w}][{10w}]]
                [{1w}]]]
        \end{forest}
        \caption{Resulting tree after step 1.}
        \label{algo2}
\end{figure}
\medskip

Figure \ref{algo1} shows step 1 of the algorithm for a portion of a binary tree $T$ ($A=\{0,1\}$), highlighting in blue a set of children leaves that get cut. In red is represented the leaf 00w whose context prohibits a transition unlike the leaf 10w. Therefore, the algorithm will not cut any of them. The resulting tree after step 1 is presented in Figure~\ref{algo2}.

\section{Illustrative example}
We now present an example of a Markov chain and its skeleton that shows that, in some cases, $K$ can be significantly smaller that $m$. 

We consider a Markov chain $X$ on state space $A=\{0,1\}$ with associated context tree $\mathfrak{T}$ given in Figure \ref{tree} (see for example \cite{cenac} for the definition of a context tree). Two of its leaves force a specific transition:
$$q(10,0)=1,\;\;q(111,0)=1.$$
For all the other leaves w $\in \mathfrak{T} \setminus \{10,\,111\}$, $q(\text{w},a)\in(0,1),\forall a\in A$. Therefore, we deduce that $X$ is a Markov chain of order $m=10$. 

Applying the skeleton determination algorithm given in the former section, we obtain a skeleton of order 3, shown in Figure \ref{skel}. Its skeleton matrix $\mathbb{M}$ is shown in Figure \ref{matrix}. Looking at $\mathbb{M}$, we can easily check that $X$ is essentially irreducible  and therefore has a unique invariant probability measure. Since $1<K<m$, by the contraposition of Proposition \ref{prop2}, we also know that $X$ is not irreducible. The original transition matrix of $X$ is too large and sparse to produce a proper visualization ($2^{10}=1024$ rows and columns, with a maximum of 2 positive entries per row).

\begin{figure}[h!]
    \begin{minipage}{0.5\textwidth}
        \tiny{
        \begin{forest}
        for tree={grow=west}
        [$\varnothing$,  
         [0,  
          [0,  
           [0[0[0[0[0][1[0][1[0][1[0][1]]]]][1]][1[0][1[0[0][1[0][1]]][1]]]][1[0[0][1]][1[0[0[0][1]][1]][1[0][1[0[0[0][1]][1]][1]]]]]]
           [1[0[0[0[0][1[0[0][1]][1]]][1[0[0[0[0][1]][1]][1[0][1]]][1]]][1]][1]]]
          [1, circle, draw]
         ]
         [1,  
          [0,  
           [0[0[0[0][1[0[0[0[0][1]][1]][1[0][1]]][1]]][1]][1[0[0[0][1[0][1]]][1[0[0][1]][1[0][1[0[0][1]][1]]]]][1[0][1[0][1[0][1[0][1]]]]]]]
           [1[0[0][1]][1]]
          ]
          [1,  
           [0,  
            [0[0[0[0][1[0][1]]][1]][1[0][1[0[0[0[0][1]][1]][1]][1[0[0][1]][1]]]]]
            [1[0[0][1[0][1[0[0[0][1]][1]][1[0][1[0][1]]]]]][1[0][1[0][1]]]]]
           [1, circle, draw]
          ]
         ]
        ]
        \end{forest}
        }
        \caption{Context tree $\mathfrak{T}$ of the 10-th order Markov chain $X$. The circled nodes correspond to prohibited transitions toward $1$. All the other contexts gives positive probabilities toward $0$ and $1$.}
        \label{tree}
    \end{minipage}
    \begin{minipage}{0.48\textwidth}
    
    \centering
        \begin{forest}
        for tree={grow=west}
        [$\varnothing$,  
         [0,  
          [0, ]
          [1, circle, draw]
         ]
         [1,  
          [0,]
          [1,  
           [0, ]
           [1,  circle, draw]
          ]
         ]
        ]
        \end{forest}
        \caption{Skeleton $\mathcal{S}$ of $X$, contexts that prohibit transitions are circled.}
        \label{skel}
        \includegraphics[width=\textwidth]{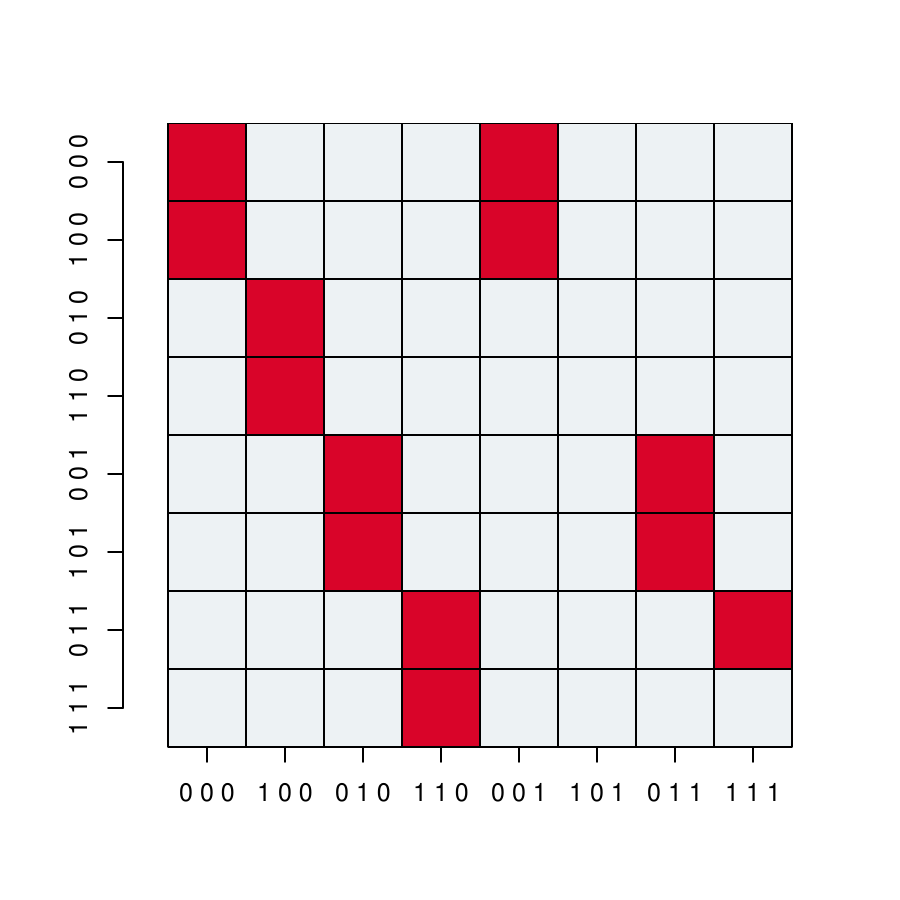}
        \caption{Skeleton matrix $\mathbb{M}$ of $\mathcal{S}$: $1$'s in red and $0$'s in grey.}
        \label{matrix}
        \vspace{2.5cm}
    \end{minipage}
\end{figure}

\section{Computational cost}

We now examine the computational cost to verify the essential irreducibility of higher-order Markov chains. For this, we state without proof the following classical criterion for essential irreducibility.
\begin{prop}
 \label{prop4}
The Markov chain $X$ with transition matrix $\mathbb{P}$ is essentially irreducible if and only if the matrix $\sum_{n=1}^{|A|^m} \mathbb{P}^n$ has a column with only strictly positive entries.
 \end{prop}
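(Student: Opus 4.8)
The plan is to prove both implications directly from the recurrent/transient decomposition of the finite chain $\mathbb{P}$ on the state space $A^m$, writing $N=|A|^m$ for the number of states. The two facts I will rely on are: (i) the entry $\left(\sum_{n=1}^{N}\mathbb{P}^n\right)(i,j)$ is strictly positive precisely when $\mathbb{P}^n(i,j)>0$ for some $n\in\{1,\dots,N\}$; and (ii) the elementary short-path principle — if $j$ is accessible from $i$ at all, then it is reachable from $i$ by a walk of positive length at most $N$, since a minimal such walk is a simple path when $i\neq j$ (at most $N-1$ edges) and a simple cycle when $i=j$ (at most $N$ edges). Combining (i) and (ii), the column indexed by $j_0$ in $\sum_{n=1}^{N}\mathbb{P}^n$ has only strictly positive entries if and only if $j_0$ is accessible (in at least one step) from every state of $A^m$.

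For the implication $(\Leftarrow)$: assuming that $\sum_{n=1}^{N}\mathbb{P}^n$ has a column $j_0$ with all entries strictly positive, I would argue that $j_0$ is accessible from every state. Let $\mathcal{R}$ be any recurrent class and pick $i\in\mathcal{R}$; since $\mathcal{R}$ is closed and $j_0$ is accessible from $i$, we get $j_0\in\mathcal{R}$. Hence every recurrent class contains $j_0$, so there is exactly one recurrent class and $X$ is essentially irreducible.

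For the implication $(\Rightarrow)$: assuming $X$ essentially irreducible with unique recurrent class $\mathcal{R}$, I would first show that $\mathcal{R}$ is accessible from every state $i$. The set $S$ of states accessible from $i$ admits no outgoing transitions, so the condensation of the directed graph restricted to $S$ has a sink, which is a closed communicating class, hence recurrent, hence equal to $\mathcal{R}$; thus $\mathcal{R}\subseteq S$. Now fix any $j_0\in\mathcal{R}$. For each $i\in A^m$ some $j\in\mathcal{R}$ is accessible from $i$, and since all states of $\mathcal{R}$ communicate, $j_0$ is accessible from $j$ and therefore from $i$. The remaining case $i=j_0$ is settled by recurrence of $j_0$: it returns to itself in a positive number of steps (through a simple cycle inside $\mathcal{R}$ if $|\mathcal{R}|\geq 2$, or because $\mathbb{P}(j_0,j_0)=1$ when $\mathcal{R}=\{j_0\}$). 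Invoking the short-path principle, for every $i$ there is $n_i\in\{1,\dots,N\}$ with $\mathbb{P}^{n_i}(i,j_0)>0$, so the column $j_0$ of $\sum_{n=1}^{N}\mathbb{P}^n$ is strictly positive.

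I expect the main obstacle to be the bookkeeping around step (ii): making the bound $N=|A|^m$ honest — which is exactly what turns the statement into a finite, checkable test — and correctly treating the diagonal entry $i=j_0$, where mere accessibility must be upgraded to a genuine positive-length return by invoking recurrence of $j_0$ rather than reachability alone. Everything else is a routine application of the structure theorem for finite Markov chains.
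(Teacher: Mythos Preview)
Your argument is correct. Note, however, that the paper explicitly states Proposition~\ref{prop4} \emph{without proof}, calling it a ``classical criterion''; so strictly speaking there is no paper proof to compare against. That said, the paper does prove the analogous statement for the binary skeleton matrix (Proposition~\ref{prop1}), and your approach mirrors that proof almost exactly: in the $(\Leftarrow)$ direction you both argue that the distinguished state $j_0$ must lie in every closed class, forcing uniqueness; in the $(\Rightarrow)$ direction you both pick $j_0$ in the unique closed class, observe it is accessible from every state, and then invoke finiteness to bound the required path length by the number of states. Your write-up is more careful on two points the paper glosses over: you justify the bound $n_i\le N$ via the short-path principle rather than just saying ``since $A$ is finite'', and you explicitly handle the diagonal entry $i=j_0$ by invoking recurrence (the paper's Proposition~\ref{prop1} proof is silent on this). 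Neither of these is a genuine difference in strategy, just a difference in level of detail.
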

 \noindent

Using a naive multiplication approach to test the essential irreducibility of a first-order Markov chain with alphabet $D$, one may require up to $|D|-1$ matrix powers, each involving $|D|^3$ operations. We then have to sum all the terms, leading to an overall computational complexity of $\mathcal{O}(|D|^4)$. Consequently, after reducing an $m$-th order Markov chain to a first-order chain on the enlarged alphabet $A^m$, testing essential irreducibility using Proposition \ref{prop4} can require up to $\mathcal{O}(|A|^{4m})$ operations. 

Alternatively, computationally efficient graph-based methods such as Tarjan’s algorithm may be used to verify the uniqueness of the recurrent class, \emph{i.e.}, essential irreducibility, directly from the transition graph \cite{tarjan1972depth}. Applied to the reduced first-order chain, this approach has complexity $\mathcal{O}(|A|^{m+1})$. 

By contrast, if the associated skeleton has order $K < m$, essential irreducibility can be assessed by simply computing the skeleton and then applying Tarjan's algorithm to the matrix $\mathbb{M}$. This procedure has computational cost $\mathcal{O}(|A|^m)$. When $K \leq m/4$, even if we naively use Proposition \ref{prop1}, we still obtain a computational cost $\mathcal{O}(|A|^m)$.  This shows that, in general, using the skeleton determination algorithm can reduce the computational complexity of checking the essential irreducibility of a high-order Markov chain.

\begin{remark}
Our algorithm works for general $m$-th order Markov chains, but when the Markov chain has a special structure, simple alternative ways to classify it may exist. This is the case for Mixture Transition Distribution models that are popular models for sparse $m$-th order Markov chains \cite{berchtold2002mixture, ost2023sparse}. These are models in which the transition kernel can be written as follows. For all ${\bf x} \in A^m$ and $a \in A$
\begin{equation*}
p({\bf x},a) = \lambda_0p_0(a)+\sum_{i = 1}^m \lambda_ip_i(x_{m+1-i}, a),
\end{equation*}
where $p_0$ is a probability on $A$, for all $i\geq 1$, $p_i:A \times A \to [0,1]$ is a transition kernel for a Markov chain of order 1 and, for all $i\geq 0$, $\lambda_i \geq 0$ with $\sum_{i=0}^m\lambda_i = 1$. An interesting case is when $\lambda_0 = 0$. In this case, let $X$ be a Markov chain with transition kernel $p$. If the stochastic matrix $\tilde{P}$, defined by $\tilde{P}(a,b) = \sum_{i = 1}^m \lambda_ip_i(a,b)$ for $(a,b)\in A^2$, is irreducible and aperiodic, then $X$ is essentially irreducible 
(see \cite{de2015one}, where a class of infinite order chains that generalize MTD models is studied). This criterion allows us to efficiently verify essential irreducibility for MTD models without constructing $\mathbb{P}$, if the stochastic matrices $p_i$ are known.
\end{remark}



\section*{ Acknowledgements}
C.G. was partially supported by FAPESP Grant 2023/07228-9.   C.T.G.H.O. thanks PIBIC/CNPq. D.Y.T. was partially supported by CNPq Grant 421955/2023-6, CAPES Grant 88887.627882/2021-00, Serrapilheira Grant 2023.

\end{document}